\newtheorem{thm}{Theorem}[section]
 \newtheorem{cor}{Corollary}[section]
 \newtheorem{prop}{Proposition}[section]
 \newtheorem{rem}{Remark}[section]
 \newcommand{\sep}[1]{{\left({#1}\right)}}
\newcommand{\ent}[1]{{\left[{#1}\right]}}
 \numberwithin{equation}{section}
\newcommand\R{\mathbb{R}}
\begin{document}

\title[On Chouikha's isochronicity criterion]
{On Chouikha's isochronicity criterion} 

\author{Jean-Marie Strelcyn}
\address{Laboratoire de Mathématiques Rapha\"el Salem,\\
CNRS, Universit\'e de Rouen \\
Avenue de l'Université BP 12\\
76801 Saint Etienne du Rouveray, \;France}
\email{ Jean-Marie.Strelcyn@univ-rouen.fr\medskip}
\address{ Laboratoire Analyse G\'eometrie et Applications,\\
     UMR CNRS 7539, Institut Gallilée, Université Paris 13,\\
      99 Avenue J.-B. Clément, 93430 Villetaneuse, France}
\email{strelcyn@math.univ-paris13.fr}

\subjclass{Primary 34C15, 34C25, 34C37}

\keywords{center, isochronicity,  Urabe function.}

\begin{abstract}
Recently A.R.Chouikha gave a new characterization of isochronicity of center at the origin for the equation $x''+g(x)=0$, where $g$ is a real smooth function defined in some neighborhood of $0 \in \R$. We describe another proof of his characterization and some new development of the subject.
\end{abstract}

\maketitle {}

\section{Introduction}

Let us consider the second order differential equation
\begin{equation}\label{(1.1)} 
x''+g(x)=0 
\end{equation}
where $g$ is a real function defined in some neighborhood of $0\in \R$ such that $g(0)=0$. The study of equation \eqref{(1.1)}  is equivalent to the study of planar system

\begin{equation}\label{(1.2)}
  \left. \begin{aligned}\dot x&= y\\
      \dot y&= -g(x)
 \end{aligned}\right\}.
\end{equation}
In what follows we shall concentrate exclusively on the system \eqref{(1.2)} with function $g$ at least of class $C^1$.

As\; $g(0)=0,\; 0\in \R^2$ is a singular point of the system \eqref{(1.2)}. If in some neighborhood of a singular point all orbits of the system are closed and surround it, then the singular point is called a {\it center}.

A center is called {\it isochronous} if the periods of all orbits in some neighborhood of it are constant, that means that the period does not depend on the orbit.
The simplest example of an isochronous center at $0\in \R^2$ is provided by the linear system

\begin{equation}\nonumber
  \left. \begin{aligned}\dot x&= y\\
      \dot y&= -x
\end{aligned}\right\}
\end{equation}
which corresponds to\; $g(x)=x$.

In future when speaking about isochronicity we always understand it with respect to $0\in \R^2$ and the system \eqref{(1.2)}. 

The problem of characterization of isochronicity of the system \eqref{(1.2)} at $0\in \R^2$ in term of function $g$ is an old one.

To the best of our knowledge the first such characterization was done in 1937 by I.Kukles and N.Piskunov in \cite{K-P}, where even the case of continuous functions $g$ is considered. The second one was described in 1962 by M.Urabe in \cite{Urabe1962} (see also \cite{RoRo1999}). Unfortunately these characterizations are not easy to handle and they are not really explicit.

The Urabe criterion plays a key role in the present paper. Thus for the sake of completeness we shall recall it now.

We shall denote 

\begin{equation}\label{(1.3)} 
G(x)=\int_{0}^{x} g(u) \,du
\end{equation}
and call the function $G$ the {\it potential} of $g$. 
 When $0\in \R^2$ is an isochronous center for the system \eqref{(1.2)}, $G$ is called an {\it isochronous potential}.
 
 Let us denote by X the continuous function defined in some neighborhood of $0 \in \R$ by

\begin{equation}\label{(1.4)} 
(X(x))^2=2\,G(x)\, \hbox{ \rm and }\, xX(x)>0 \;for \; x\not=0.
\end{equation} 

Let us formulate now {\it Urabe Isochronicity Criterion}.
\begin{thm}[\cite{Urabe1962}]\label{thm:Urabe}
Let $g$ be a $C^1$ function defined in some neighborhood of \;$0 \in \R$. Let $g(0)=0$ and $g'(0)={\lambda}^2, \lambda>0.$ Then $0\in \R^2$ is an isochronous center for the system \eqref{(1.2)} if and only if
\begin{equation}\label{(1.5)} 
g(x)={\lambda}\frac{X(x)}{1+h(X(x))}
\end{equation}
where the function $X$ is defined by \eqref{(1.4)} and where $h$ is a continuous odd function defined in some neighborhood of $\;0 \in \R$.
\end{thm}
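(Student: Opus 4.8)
The plan is to reduce the period of an orbit to a single integral and then to a classical Abel integral equation. Since $g(0)=0$ and $g'(0)=\lambda^2>0$, the potential satisfies $G''(0)=g'(0)=\lambda^2>0$, so $G$ has a strict local minimum at the origin and $E=\tfrac12 y^2+G(x)$ is a first integral whose small level sets are closed curves encircling $0$; hence the origin is a center. On the orbit of energy $E$, bounded by the turning points $x_-<0<x_+$ determined by $G(x_\pm)=E$, one has $y=\pm\sqrt{2\sep{E-G(x)}}$, and integrating $dt=dx/y$ over the loop gives
\begin{equation}\nonumber
T(E)=2\int_{x_-}^{x_+}\frac{dx}{\sqrt{2\sep{E-G(x)}}}.
\end{equation}

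The central device is the substitution $u=X(x)$. Because $X(x)^2=2G(x)$ with $xX(x)>0$ and $X'(0)=\lambda>0$, the function $X$ is a strictly increasing homeomorphism of a neighbourhood of $0$ onto a neighbourhood of $0$, of class $C^1$ away from the origin; write $x=x(u)$ for its inverse. From $dG=g\,dx$ and $G=\tfrac12 u^2$ we get $g(x)\,dx=u\,du$, while $2\sep{E-G(x)}=R^2-u^2$ with $R:=\sqrt{2E}=X(x_+)=-X(x_-)$. Substituting, the period becomes
\begin{equation}\nonumber
T(E)=\frac{2}{\lambda}\int_{-R}^{R}\frac{1+h(u)}{\sqrt{R^2-u^2}}\,du,\qquad 1+h(u):=\frac{\lambda u}{g\sep{x(u)}},
\end{equation}
and one checks that $h$ so defined is continuous with $h(0)=0$ (using $X(x)\sim\lambda x$ and $g(x)\sim\lambda^2 x$ near $0$); moreover $g$ obeys \eqref{(1.5)} with a continuous odd function precisely when this uniquely determined $h$ is itself odd.

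For the sufficiency direction I would simply insert an odd $h$: the term $\int_{-R}^{R}(R^2-u^2)^{-1/2}\,du=\pi$, while the odd integrand $h(u)(R^2-u^2)^{-1/2}$ integrates to $0$ over the symmetric interval, so $T(E)\equiv 2\pi/\lambda$ is constant and the center is isochronous. For the necessity direction, split $h$ into its even and odd parts; the odd part again drops out, so isochronicity ($T(E)\equiv 2\pi/\lambda$) forces
\begin{equation}\nonumber
\int_0^R \frac{h_{\mathrm{ev}}(u)}{\sqrt{R^2-u^2}}\,du=0\qquad\text{for all small }R>0 .
\end{equation}

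The main obstacle is to conclude $h_{\mathrm{ev}}\equiv 0$ from this identity. I would recognize the displayed relation, after the substitution $v=u^2$, $s=R^2$, as a homogeneous Abel integral equation $\int_0^s \psi(v)(s-v)^{-1/2}\,dv=0$ with $\psi(v)=h_{\mathrm{ev}}(\sqrt v)/(2\sqrt v)$; the injectivity of the Abel transform (equivalently, its explicit inversion formula) yields $\psi\equiv 0$, hence $h_{\mathrm{ev}}\equiv 0$ and $h$ is odd, giving \eqref{(1.5)}. This Abel-inversion step is the crux: one must verify that $h_{\mathrm{ev}}$ is regular enough (continuity suffices) for the inversion to apply, and treat the integrable endpoint singularities with care.
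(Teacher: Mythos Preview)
Your argument is essentially correct and follows the classical route: express the period in the $X$-variable, reduce isochronicity to the vanishing of $\int_0^R h_{\mathrm{ev}}(u)(R^2-u^2)^{-1/2}\,du$ for all small $R$, and conclude via injectivity of the Abel transform. One small point on the step you yourself flag: after the substitution $v=u^2$ the function $\psi(v)=h_{\mathrm{ev}}(\sqrt v)/(2\sqrt v)$ need not lie in $L^1_{\mathrm{loc}}$ from mere continuity of $h_{\mathrm{ev}}$ with $h_{\mathrm{ev}}(0)=0$. Under the hypothesis $g\in C^1$, however, the expansions \eqref{(2.3)}--\eqref{(2.5)} show that $X\in C^1$ with $X'(0)=\lambda$, hence the inverse $x(u)$ and then $h$ are $C^1$; thus $h_{\mathrm{ev}}(u)=O(u)$ and $\psi$ is bounded, so the Abel inversion applies cleanly.

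Note, however, that the paper does \emph{not} give its own direct proof of this statement: Theorem~\ref{thm:Urabe} is quoted from \cite{Urabe1962} as a known input. What Section~2 actually proves is the equivalence \eqref{(1.5)}$\,\Leftrightarrow\,$\eqref{(1.7)}, and the paper then remarks that combining this with Chouikha's independent proof of \eqref{(1.7)}$\,\Leftrightarrow\,$isochronicity (based on the Chow--Wang formula for the derivative of the first-return map, in \cite{cho2011}) yields, as a by-product, an alternative proof of Urabe's theorem. That indirect route avoids the period integral and the Abel inversion altogether, at the cost of relying on an external result; your direct argument is self-contained and is essentially Urabe's original approach.
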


The function $h$ is called {\it Urabe function} of the system \eqref{(1.2)}. 

Let us note that $\omega=\dfrac{2\pi}{\lambda}$ is the period of orbits of the above isochronous center.

Let us stress that from \eqref{(1.3)} and from assumptions on $g$ in Urabe theorem it follows that $G(0)=0$ and that in some punctured neighborhood of $0$, $G(x)>0$, $G$ is of class $C^2$. As will be proven in Sec. 2, under our assumptions, $X$ and $h$ are of class $C^1$. In fact, from the proof of Sec. 2, it follows that if $g \in C^k, k \geq 1$ (resp. $g$ is real-analytic), then $X$ and $h$ are of class $C^k$
(resp. $X$ and $h$ are real-analytic). 

From now on we shall always assume that $g \in C^1(]-\epsilon, \epsilon[))$ for some $\epsilon>0$ and that 
\begin{equation*} 
g'(0)=\lambda^2, \;\lambda>0. 
\end{equation*} 

In September 2011 in a highly important paper \cite{cho2011}, A.R.Chouikha published a completely new criterion of isochronicity (\cite{cho2011}, Theorem B) which is much more direct and explicit that all previously known.

\begin{thm}[\cite{cho2011}]\label{thm:Chouikha}
Let $g \in C^1(]-\epsilon, \epsilon[)$ for some $\epsilon>0$. Let $g(0)=0$ and $g'(0)>0$. Then $0\in \R^2$ is an isochronous center for the system \eqref{(1.2)} if and only if there 
exists $\delta$, $0 < \delta \leq \epsilon$ such that for $\vert x \vert \leq \delta$ one has 

\begin{equation}\label{(1.7)} 
\frac{d}{dx}\ent{\frac{G(x)}{g^2(x)}}=f(G(x))
\end{equation}
where $f$ is  a continuous functions defined on some interval $[\,0, \eta\,]$ for some $\eta>0$
\end{thm}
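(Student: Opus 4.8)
The plan is to prove Chouikha's criterion by reducing it to the Urabe criterion (Theorem~\ref{thm:Urabe}), which is already established. The strategy is to show that the differential condition \eqref{(1.7)} is equivalent to the existence of a Urabe function $h$ as in \eqref{(1.5)}. First I would use the change of variables $s = X(x)$, where $X$ is defined by \eqref{(1.4)}, so that $(X(x))^2 = 2G(x)$ and hence $G(x) = s^2/2$. Since $X$ is of class $C^1$ and $X'(x) = g(x)/X(x)$ (differentiating $X^2 = 2G$ and using $G' = g$), the quantity $G/g^2$ transforms into an expression in the new variable $s$. The key computation is to rewrite \eqref{(1.5)} as $g(x)(1 + h(s)) = \lambda X(x) = \lambda s$, which gives $g(x) = \lambda s/(1 + h(s))$ and therefore
\begin{equation*}
\frac{G(x)}{g^2(x)} = \frac{s^2/2}{\lambda^2 s^2/(1+h(s))^2} = \frac{(1+h(s))^2}{2\lambda^2}.
\end{equation*}
This identity is the heart of the matter: it exhibits $G/g^2$ as a function of $s = X(x)$ alone, hence as a function of $G(x) = s^2/2$ if and only if that function is even in $s$.

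Next I would differentiate with respect to $x$ using the chain rule, $\frac{d}{dx} = X'(x)\frac{d}{ds} = \frac{g(x)}{X(x)}\frac{d}{ds} = \frac{g(x)}{s}\frac{d}{ds}$. Applying this to $(1+h(s))^2/(2\lambda^2)$ yields
\begin{equation*}
\frac{d}{dx}\ent{\frac{G(x)}{g^2(x)}} = \frac{g(x)}{s}\cdot\frac{(1+h(s))h'(s)}{\lambda^2}.
\end{equation*}
Since $g(x) = \lambda s/(1+h(s))$, the factor of $g(x)/s$ cancels the $(1+h(s))$ and one power of $\lambda$, leaving $h'(s)/\lambda$. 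Thus \eqref{(1.7)} holds with a function $f$ of $G(x) = s^2/2$ precisely when $h'(s)/\lambda$ can be written as a function of $s^2$, i.e. as a function of $G$. The forward direction (isochronicity implies \eqref{(1.7)}) then follows by taking $f(G) = h'(s)/\lambda$ where $s = \sqrt{2G}$: because $h$ is odd, $h'$ is even, so $h'(s)/\lambda$ is genuinely a function of $s^2 = 2G$, and its continuity follows from the regularity of $h$ established after Theorem~\ref{thm:Urabe}.

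For the converse I would argue that if \eqref{(1.7)} holds with continuous $f$, then integrating recovers $G/g^2$ as a function of $G$ alone, and unwinding the substitution produces a candidate $h$ satisfying $(1+h(s))^2 = 2\lambda^2\,G/g^2$. The delicate points are fixing the correct sign of the square root so that $h(0) = 0$ (using $g'(0) = \lambda^2$ to evaluate the limit of $G/g^2$ at the origin, which should give $1/(2\lambda^2)$), and verifying that the resulting $h$ is odd. Oddness is exactly the content of the requirement that the integrated expression depends only on $G = s^2/2$: a function of $s^2$ forces $h'$ to be even and hence $h$ to be odd after the normalization $h(0)=0$. I expect the main obstacle to be this converse direction, specifically the careful bookkeeping needed to show that the parametrization by $G$ (rather than by $s$) is equivalent to the evenness in $s$ that oddness of $h$ demands, together with confirming that $g$ does not vanish on a punctured neighborhood so that $G/g^2$ and its derivative are well defined and the substitution $s = X(x)$ is a valid $C^1$ diffeomorphism. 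Once these regularity and sign issues are settled, Theorem~\ref{thm:Urabe} delivers the equivalence with isochronicity and completes the proof.
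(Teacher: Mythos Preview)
Your approach is essentially the paper's: both reduce \eqref{(1.7)} to Urabe's criterion via the substitution $s = X(x)$ and arrive at the relation $h'(s) = \lambda f(s^2/2)$. The only difference is that the paper works with $X/g$ rather than its square $G/g^2$: using $g = XX'$ it rewrites \eqref{(1.7)} as $(X/g)' = f(X^2/2)\,X'$, then integrates (with the initial value $\lim_{x\to 0} X/g = 1/\lambda$, which you also identify) to obtain directly $h(s) = \lambda\int_0^s f(q^2/2)\,dq$, the formula \eqref{(1.8)}. This sidesteps your square-root extraction and makes the oddness of $h$ immediate from the evenness of the integrand. Your phrase ``integrating recovers $G/g^2$ as a function of $G$ alone'' is a slight misstep---the integration is in $x$ (equivalently in $s$), not in $G$, since $G$ is not monotone near $0$---but once you define $h = \lambda X/g - 1$, your own chain-rule identity already gives $h'(s) = \lambda f(s^2/2)$, which is exactly the paper's route in different dress.
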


We shall call the equation \eqref{(1.7)} the {\it Chouikha equation}.

Let us pause now in the history of this theorem. 
In early February 2010, A.R. Chouikha communicated to me his {\it first} proof of his theorem valid only  in real-analytic setting. 
Some time after he presented to me the {\it second} proof also valid only in real-analytic setting. 
The first proof was based on Urabe theorem, the second one on  S.N. Chow and D. Wang \cite{Cho-Wan1986} formula for the derivative of the first return map for the system \eqref{(1.2)}.
These proofs were never published. 
At the beginning of July 2011, A.R. Chouikha and myself, simultaneously and independently obtained two different proofs of Chouikha theorem in smooth setting. 
Both proofs are the adaptation of the previous Chouikha's proofs in real-analytic setting. 
The Chouikha's proof published in \cite{cho2011} is the adaptation of his second proof. My proof is the adaptation of his first proof. 

As a consequence of this last proof we obtain an unexpected closed relation between Urabe function $h$ and function $f$ from Chouikha

\begin{cor}
\begin{equation}\label{(1.8)} 
h(s)=\lambda\int_{0}^{s} f({\dfrac{q^2}{2}})dq,
\end{equation}
where $g'(0)=\lambda^2, \;\lambda>0$.
\end{cor}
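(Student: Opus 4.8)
The plan is to combine the Urabe representation \eqref{(1.5)} with the Chouikha equation \eqref{(1.7)} and read off an ordinary differential equation for $h$. Under the standing hypotheses isochronicity holds, so Theorem~\ref{thm:Urabe} lets me write $g(x)=\lambda X(x)/(1+h(X(x)))$ with $X^2=2G$, while Theorem~\ref{thm:Chouikha} tells me that the left-hand side of \eqref{(1.7)} equals $f(G(x))$. The first step is purely algebraic: substituting the Urabe formula into $G/g^2$ and using $G=X^2/2$ gives
\begin{equation*}
\frac{G(x)}{g^2(x)}=\frac{\sep{1+h(X(x))}^2}{2\lambda^2},
\end{equation*}
which isolates all the dependence on $h$ in a single clean expression.

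Next I would differentiate this identity in $x$. The only auxiliary computation needed is $X'$: differentiating $X^2=2G$ and using $G'=g$ yields $XX'=g$, hence $X'=g/X=\lambda/(1+h(X))$ after invoking \eqref{(1.5)} once more. Applying the chain rule to $G/g^2$ and cancelling the common factor $1+h(X)$ then collapses everything to
\begin{equation*}
\frac{d}{dx}\ent{\frac{G(x)}{g^2(x)}}=\frac{h'(X(x))}{\lambda}.
\end{equation*}
Here I rely on the fact that $X$ and $h$ are of class $C^1$, as recorded in the remarks following Theorem~\ref{thm:Urabe}, so that these derivatives are legitimate.

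Comparing with \eqref{(1.7)} and recalling $G=X^2/2$, I obtain $h'(X(x))=\lambda f\sep{(X(x))^2/2}$. To convert this into an identity in a free variable, I would note that $X(0)=0$ and $X'(0)=\lambda>0$, so $X$ is a local diffeomorphism at $0$ whose image covers a full neighborhood of $0$; consequently $h'(s)=\lambda f(s^2/2)$ for all small $s$. Integrating from $0$ to $s$ and using $h(0)=0$ (valid because $h$ is odd and continuous) then yields exactly \eqref{(1.8)}.

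The algebra itself is short, so the only genuine point to watch is the regularity bookkeeping: one must be certain that $h$ is differentiable and that the Chouikha equation may be evaluated along the curve $s=X(x)$, i.e.\ that $X$ really sweeps out a neighborhood of $0$. Both facts are supplied by the $C^1$ assertions following Theorem~\ref{thm:Urabe} together with $X'(0)=\lambda\neq 0$; once they are in hand, the chain-rule manipulation above finishes the proof.
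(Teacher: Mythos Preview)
Your argument is correct and rests on the same computation as the paper: rewriting $G/g^2=\tfrac{1}{2}(X/g)^2$, using $g=XX'$, and integrating the resulting first-order relation with the initial value coming from $X(x)/g(x)\to 1/\lambda$ (equivalently $h(0)=0$). The only difference is the order of operations: the paper starts from \eqref{(1.7)} alone, integrates $(X/g)'=f(X^2/2)\,X'$ to obtain $X/g=\tfrac{1}{\lambda}\bigl(1+\lambda\!\int_0^{X}f(q^2/2)\,dq\bigr)$, and then \emph{defines} $h$ by \eqref{(1.8)}, thereby proving \eqref{(1.7)}$\Rightarrow$\eqref{(1.5)} and the corollary simultaneously; you instead feed the already-known Urabe form \eqref{(1.5)} into $G/g^2$ and differentiate to read off $h'(s)=\lambda f(s^2/2)$. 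Both routes are equally short and use the same regularity inputs ($X,h\in C^1$, $X'(0)=\lambda>0$), so this is a reordering rather than a genuinely different method.
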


Let us also note that both proofs together give a short, simple and very clever proof of the Urabe theorem.

The paper is organized as follows. In Sec. 2 we present our proof of Chouikha theorem and obtain the identity \eqref{(1.8)}. In Sec. 3 we prove the existence and uniqueness of solutions for Chouikha equation \eqref{(1.7)}. This gives a new insight on the problem of global parametrization of isochronous center of the system \eqref{(1.2)}, the problem already studied in Sec. 4 of \cite{cho2011}. This is the subject of Sec. 4.

In the whole paper we shall freely use the notations introduced above. Finally, let us stress the completely elementary nature of the present paper.


\section{A proof of Chouikha theorem}

We shall prove that the Chouikha equation \eqref{(1.7)} 

\begin{equation*}
\sep{\frac{G(x)}{g^2(x)}}'=f(G(x)),
\end{equation*}
where $'=\dfrac{d}{dx}$ and where $f$ is  a continuous functions defined on some interval $[\,0, \eta\,]$ for some $\eta>0$, is equivalent to the Urabe condition of isochronicity \eqref{(1.5)}

\begin{equation*}
g(x)={\lambda}\frac{X(x)}{1+h(X(x))}
\end{equation*}
where the function $X$ is defined by \eqref{(1.4)} and where $h$ is a continuous odd function defined in some neighborhood of $\;0 \in \R$.

From \eqref{(1.4)} one obtains that the Chouikha equation \eqref{(1.7)} is equivalent to 

\begin{equation*}
\sep{\frac{(X(x))^2}{2(g(x))^2}}' =\frac{1}{2}\sep{\sep{\frac{X(x)}{g(x)}}^2}'=f\sep{\frac{{X^2}(x)}{2}}
\end{equation*}
and consequently equivalent to
\begin{equation*}
\frac{X(x)}{g(x)}\sep{\frac{X(x)}{g(x)}}^\prime =f\sep{\frac{X^2(x)}{2}}.
\end{equation*}
But from \eqref{(1.4)}, $g(x) = G^\prime(x) = X(x)X^\prime(x)$ and finally the Chouikha equation \eqref{(1.7)} is equivalent to
\begin{equation} \label{(2.1)}
\sep{\frac{X(x)}{g(x)}}^\prime =f\sep{\frac{X^2(x)}{2}}X^\prime(x).
\end{equation}
\vspace{0,2cm}

To integrate the equation \eqref{(2.1)} let us first prove that

\begin{equation}\label{(2.2)} 
\lim_{x \to 0}{\frac{X(x)}{g(x)}}=\frac{1}{\lambda}>0.
\end{equation}
Indeed, as $g'(0)={\lambda}^2, \lambda>0$, then 
\begin{equation}\label{(2.3)} 
g(x)={{\lambda}^2}x+\widetilde{g}(x) , \hbox{ \rm where } \,\widetilde{g}(x)=o(\vert x \vert)
\end{equation}
and
\begin{equation}\label{(2.4)} 
G(x)={\lambda}^2\frac{x^2}{2}+\widetilde{G}(x), \hbox{ \rm where } \,\widetilde{G}(x)=o({\vert x \vert}^2).
\end{equation}
From \eqref{(1.4)} and \eqref{(2.4)} one has
\begin{equation}\label{(2.5)} 
X(x)=x\sqrt{{\lambda}^2 + \frac{2}{x^2}\widetilde{G}(x)}.
\end{equation}
Replacing in the left side of \eqref{(2.2)}  $X(x)$ and $g(x)$ by \eqref{(2.5)} and \eqref{(2.3)} respectively  one obtains the indicated limit.
\vspace{0,1cm}

Now taking into account \eqref{(2.2)} we can integrate the equation \eqref{(2.1)}. Indeed, by the change of variables $X(s)=q$, from \eqref{(2.1)} one obtains:
\begin{equation}\label{(2.6)}
{\frac{X(x)}{g(x)}}=\frac{1}{\lambda}+\int_{0}^{x} f\sep{\dfrac{{X^2}(s)}{2}}X'(s)ds=
\frac{1}{\lambda}+\int_{0}^{X(x)} f\sep{\dfrac{{q^2}}{2}}dq.
\end{equation}

Up to now we have proven the equivalence between Chouikha equation \eqref{(1.7)} and the equation \eqref{(2.6)}.

Let us denote
\begin{equation*}
h(X)=\lambda\int_{0}^{X} f({\frac{q^2}{2}})dq.
\end{equation*}

\noindent $h$ is an odd function as a primitive of an even one. $h$ is of class $C^1$ because $h'(X)={\lambda}f({\dfrac{X^2}{2}})$ and $f$ is continuous; $h$ and $f$ defined for $\vert x \vert$ small enough. Thus for $\vert x \vert$ small enough one has
$${\frac{X(x)}{g(x)}}=\frac{1}{\lambda}\sep{1+h(X(x))}$$ 
and finally $g(x)={\lambda}\dfrac{X(x)}{1+h(X(x))}$, that is the Urabe condition \eqref{(1.5)}.
This concludes the proof that \eqref{(1.7)} implies \eqref{(1.5)} as well as the proof of the formula \eqref{(1.8)}.
\vspace{0,1cm}

Reciprocally, let us note that \eqref{(2.4)} and \eqref{(2.5)} imply that the function $X$ is of class $C^1$ and that $X'(0)=\lambda>0$. Then the inverse function $x=x(X)$ is also of class $C^1$. As a consequence, from \eqref{(1.5)}
one deduces immediately that the odd function $h=h(X)$ is also of class $C^1$. Thus the even function $h'$ is continuous and we define the continuous function $f$ by $h'(X)=\lambda f({\dfrac{X^2}{2}})$. That means that
\begin{equation}\label{(2.7a)}
f(s)=\dfrac{1}{\lambda}h'(\sqrt{2s}), s \geq 0.
\end{equation}
The equivalence between Chouikha equation \eqref{(1.7)} and the equation \eqref{(2.6)} closes the proof.
\vspace{0,1cm}
$\Box$
\vspace{0,1cm}

Let us suppose that we only know that $g'(0)>0$, so that $g'(0)=\nu^2, \nu>0$, and that \eqref{(1.5)} is satisfied for some $\lambda>0$. 
Inspecting the arguments in the proof of the smoothness of functions $X$ and $h$,  one concludes that $X'(0)= \nu$. Now, deriving in $x$ both sides of \eqref{(1.5)} and taking $x=0$ one concludes that $g'(0)={\lambda}{\nu}$. But $g'(0)=\nu^2$ and thus $\nu=\lambda$. This proves 
that in the formulation of Urabe Isochronicity Criterion it suffices to suppose that $g'(0)>0$ and that \eqref{(1.5)}  is satisfied for some 
$\lambda>0$. 
 
Let us verify the formula \eqref{(1.8)} on one concrete example. Let us consider the {\it Urabe isochronous potential}
\begin{equation}\label{(2.7)}
G(x)=\frac{1}{2}\sep{\frac{\sqrt{1+2ax}-1}{a}}^2,
\end{equation}
where $a>0$ and $-\dfrac{1}{2a}<x<\dfrac{1}{2a}$ (\cite{Dor2005}, Sec.C, Family I). For this potential 
\begin{equation*} 
g(x)=\frac{\sqrt{1+2ax}-1}{a\sqrt{1+2ax}},
\end{equation*}
$g'(0)=1$ and $\sep{\dfrac{G}{g^2}}'=const=a$. That means that for Urabe potential $f(G)=const=a$ and then from the formula \eqref{(1.8)} the Urabe function of this potential is equal to $h(X)=aX$ because $\lambda=\sqrt{g'(0})=1$. From \eqref{(2.7)} it is easy to see that for every ${\vert x\vert}<\dfrac{1}{2a}$ one has 
\begin{equation*} 
X(x)=\frac{\sqrt{1+2ax}-1}{a},
\end{equation*}
where $X(x)$ is defined by \eqref{(1.4)}. By direct computation one verifies that $g(x)=\dfrac{X(x)}{1+aX(x)}$ as expected by \eqref{(1.5)}. 


\section{Existence and uniqueness of solutions for Chouikha equation}

Let us denote
\begin{equation*}
F(x)=\int_{0}^{x} f(u) \,du.
\end{equation*}

\noindent $F\in C^2([0, \alpha]), F(0)=0$.\; Moreover, $G\in C^2(]-\delta, \delta[), G(0)=G'(0)=g(0)=0.$

First, let us prove the Proposition that appears also in [Cho], Corollary 3-3.

\begin{prop}
The Chouikha equation \eqref{(1.7)} is equivalent to equation
\begin{equation}\label{(3.4)} 
2G(x)=\ent{x+F(G(x))}G'(x).
\end{equation}
\end{prop}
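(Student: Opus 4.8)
The plan is to reduce both equations to a single auxiliary function and compare them. Since $G'=g$ and $F'=f$, and since $g(x)\neq 0$ for $0<|x|<\delta$ (indeed $g(0)=0$ and $g'(0)=\lambda^2>0$ force $xg(x)>0$ near the origin), I may divide \eqref{(3.4)} by $g$ away from $0$ and rewrite it as
\[
\frac{2G(x)}{g(x)} = x + F(G(x)).
\]
Accordingly I would introduce
\[
P(x) := \frac{2G(x)}{g(x)} - x - F(G(x)),
\]
so that, for $0<|x|<\delta$, equation \eqref{(3.4)} holds if and only if $P\equiv 0$ there; at $x=0$ both sides of \eqref{(3.4)} vanish trivially.

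First I would check that $P$ extends continuously to $x=0$ with $P(0)=0$. Using the expansions \eqref{(2.3)}--\eqref{(2.4)}, or directly the limit \eqref{(2.2)}, one gets $2G(x)/g(x)\to 0$ as $x\to 0$; combined with $F(G(0))=F(0)=0$ this gives the continuous extension and $P(0)=0$.

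The heart of the argument is a differentiation identity. Differentiating $P$ on each punctured interval $0<\pm x<\delta$ (where $g\neq 0$), using $G'=g$, $\frac{d}{dx}F(G)=f(G)\,g$ and the quotient rule, I expect to obtain after simplification
\[
P'(x) = g(x)\,\sep{\frac{d}{dx}\ent{\frac{G(x)}{g^2(x)}} - f(G(x))}.
\]
Because $g(x)\neq 0$ off the origin, this shows that $P'(x)=0$ at such an $x$ if and only if the Chouikha equation \eqref{(1.7)} holds at $x$.

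Finally I would glue the two sides across the origin. If \eqref{(1.7)} holds on $|x|\le\delta$, then $P'\equiv 0$ on $(-\delta,0)$ and on $(0,\delta)$, so $P$ is constant on each; continuity at $0$ together with $P(0)=0$ then forces $P\equiv 0$, i.e.\ \eqref{(3.4)}. Conversely, $P\equiv 0$ yields $P'\equiv 0$ off $0$, hence \eqref{(1.7)} for $x\neq 0$, and the value at $x=0$ follows by continuity of both members of \eqref{(1.7)}. The main obstacle is precisely the origin, where $g$ vanishes: one must divide by $g$ only off $0$, compute the limit of $2G/g$ to define $P(0)$, and invoke the continuity/constancy argument to transfer the pointwise identity through $x=0$.
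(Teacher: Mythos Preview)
Your argument is correct and is essentially the paper's: both multiply the Chouikha equation by $g=G'$, recognize the result as the derivative of $2G/G'-x-F(G)$, and then integrate using the value at the origin. The only differences are cosmetic --- the paper phrases the derivative identity via $(G/G')'=1-GG''/(G')^{2}$ rather than your direct computation of $P'$ --- and your explicit treatment of the singularity at $x=0$ (continuous extension of $P$, constancy on each half-interval, gluing) is in fact more careful than the paper's one-line integration step.
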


\begin{proof}
The equation \eqref{(1.7)} can be written as $\sep{\dfrac{G}{g^2}}'=F'(G)$ which means
\begin{equation*}
1-\dfrac{2GG''}{(G')^2}=F'(G)G'=\ent{F(G)}'.
\end{equation*}
On the other hand $\sep{\dfrac{G}{G'}}'=1-\dfrac{GG''}{(G')^2}$. Thus the equation \eqref{(1.7)} is equivalent to 
\begin{equation*}
2\sep {\dfrac{G}{G'}}'=1+\ent{F(G)}'=\ent{x+F(G(x))}'.
\end{equation*}
Integrating the last identity and taking into account that $G'(0)=g(0)\neq0$ and that $G(0)=F(0)=0$ we conclude its equivalence with \eqref{(3.4)}.
\end{proof}

From  now on we shall suppose that $f\in C^1\sep{[0, \epsilon]}, \epsilon>0$, where in $0$ and in $\epsilon$ one considers the one-sided first derivatives. This condition can surely be weakened but we shall not examine this problem here. As before $g\in C^1\sep{]-\delta, \delta[},\; \delta>0$. 

Let us recall the well known {\it Hadamard lemma} (\cite{Arn1992}, p.143 and \cite{Hartman2002}, Chap. V, Lemma 3.1) that we formulate for one variable case only.

\vspace{0,1cm}
\noindent {\bf Hadamard lemma.} \; {\it Let $P \in C^k(]-\alpha, \alpha[)$ \; for some $\alpha>0$ where $k \geq 1$ and let $P(0)=0$. Then for $|x|<\alpha$ one has $P(x)=xQ(x)$ where $Q \in C^{k-1}(]-\alpha, \alpha[)$}.
\vspace{0,1cm}

Its proof works as well if instead of open interval $]-\alpha, \alpha[$ one takes the half-open interval 
$[0, \alpha[$. We shall use this remark afterwards. 
\vspace{0,1cm}

By Hadamard lemma one has
\begin{equation}\label{(3.3)}
F(x)=xK(x) \;\;\hbox{ \rm and } \;\; G(x)=x^2H(x),
\end{equation}where $K\in C^1([0, \epsilon]), \epsilon>0$ and $H\in C^0(]-\delta, \delta[),\; \delta>0$. Moreover, 
$H\in C^2(]-\delta,\delta[) \; \backslash \; \{0\})$.

Let us note that for $x \neq 0, \vert x \vert < \delta$, the equation \eqref{(3.4)} is equivalent to the equation
\begin{equation*}
H'(x)=-\dfrac{2H^2(x)K(x^2H(x))}{1+xH(x)K(x^2H(x))}.
\end{equation*}
This remark is at the basis of the proof of the following theorem.

\begin{thm}\label{(thm 3.1)}
 Let $\epsilon>0$ and $\lambda>0$. Let $f\in C^1([0, \epsilon])$. There exists $\delta,\; 0<\delta\leq\epsilon$ and a unique function 
$g\in C^1(]-\delta, \delta[)$, $g'(0)={\lambda}^2$ such that for every $\vert x \vert<\delta$ the Chouikha equation  \eqref{(1.7)}
\begin{equation*} 
\frac{d}{dx}\ent{\frac{G(x)}{g^2(x)}}=f(G(x)) 
\end{equation*} 
is satisfied.   
\end{thm}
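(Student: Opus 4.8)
The plan is to read the statement as an inverse problem: given $f$ (equivalently, given $F$, $K$) we must reconstruct the potential $G$, and hence $g=G'$, and I would do this through the auxiliary function $H$ of the Hadamard factorisation \eqref{(3.3)}. Writing
\begin{equation*}
\Phi(x,H)=-\frac{2H^2K(x^2H)}{1+xHK(x^2H)},
\end{equation*}
the remark preceding the statement says that for $x\neq0$ equation \eqref{(3.4)} is equivalent to the scalar ODE $H'(x)=\Phi(x,H(x))$. So I would construct $H$ as the solution of this ODE with a suitable initial value. The correct value is forced by the normalisation $g'(0)=\lambda^2$: from $G=x^2H$ one gets $g=G'=2xH+x^2H'$, whence $g(0)=0$ automatically and $g'(0)=2H(0)$, so the requirement $g'(0)=\lambda^2$ is precisely the initial condition $H(0)=\lambda^2/2>0$.

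The key point, and the place where the apparent singularity at $x=0$ disappears, is that although the $H$-equation was derived only for $x\neq0$, its right-hand side $\Phi$ extends to a genuine $C^1$ function on a full neighbourhood of $(0,\lambda^2/2)$. Indeed $K\in C^1([0,\epsilon])$ by \eqref{(3.3)}; for $(x,H)$ close to $(0,\lambda^2/2)$ the argument $x^2H$ of $K$ lies in $[0,\epsilon[$, and the denominator $1+xHK(x^2H)$ is close to $1$, hence nonvanishing, there. I would therefore invoke the Cauchy (Picard--Lindel\"of) existence and uniqueness theorem to obtain, on some interval $]-\delta,\delta[$ with $0<\delta\le\epsilon$, a unique solution $H$ with $H(0)=\lambda^2/2$; shrinking $\delta$ keeps $x^2H$ in $[0,\epsilon[$ and the denominator bounded away from $0$. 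Since $\Phi$ is $C^1$ and $H$ is $C^1$, the map $x\mapsto\Phi(x,H(x))=H'(x)$ is itself $C^1$, so in fact $H\in C^2(]-\delta,\delta[)$.

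It then remains to run the reduction backwards, which is where the endpoint $x=0$ must be checked by hand. Set $G(x)=x^2H(x)$ and $g(x)=G'(x)=2xH(x)+x^2H'(x)$; because $H\in C^2$ we have $G\in C^2$ and $g\in C^1$, with $g(0)=0$, $g'(0)=2H(0)=\lambda^2$, and $G(x)>0$ for $0<|x|<\delta$. For $x\neq0$ the ODE is equivalent to \eqref{(3.4)}, while at $x=0$ equation \eqref{(3.4)} reduces to $0=0$ since $G(0)=F(0)=0$; hence \eqref{(3.4)} holds on all of $]-\delta,\delta[$, and by the Proposition it is equivalent to the Chouikha equation \eqref{(1.7)}. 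Note that \eqref{(1.7)} is meaningful even at $x=0$, since $\dfrac{G}{g^2}=\dfrac{H}{(2H+xH')^2}$ is $C^1$ near $0$, the denominator tending to $\lambda^4\neq0$.

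Finally, for uniqueness I would reverse the same chain: any admissible $g\in C^1(]-\delta,\delta[)$ (so $g(0)=0$, $g'(0)=\lambda^2$) satisfying \eqref{(1.7)} gives, via the factorisation $G=x^2H$ of \eqref{(3.3)}, a $C^1$ function $H$ that solves the same ODE with the same initial value $H(0)=\lambda^2/2$, and so must coincide with the solution found above; therefore so must $g=G'$. The only genuine obstacle in the whole argument is the verification in the second paragraph that $\Phi$ is regular across $x=0$: once the singular-looking reduction is recognised as an ordinary Cauchy problem, existence, uniqueness and the regularity of $g$ all follow from standard ODE theory.
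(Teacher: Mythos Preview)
Your argument is correct and follows essentially the same route as the paper: reduce the Chouikha equation, via Proposition~3.1 and the Hadamard factorisation $G=x^2H$, to the Cauchy problem $H'=\Phi(x,H)$, $H(0)=\lambda^2/2$, and invoke standard ODE existence--uniqueness. Your write-up is in fact more careful than the paper's at the endpoint $x=0$ (regularity of $\Phi$ there, verification of \eqref{(3.4)} at $0$, and well-definedness of $G/g^2$); the only small point to watch is that in the uniqueness direction Hadamard's lemma gives a priori only $H\in C^0$ at $0$, so the claim ``$H$ is $C^1$'' should be justified by noting that $H'=\Phi(x,H)$ on $x\neq0$ together with continuity of $\Phi$ forces $H'$ to extend continuously across $0$.
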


\begin{proof}
First we shall prove that if $G$ satisfies the Chouikha equations, then the function $H$ defined by \eqref{(3.3)} is of class $C^2$ in some neighborhood of $0$.

Let $a \in \R$ be fixed. Let $\eta>0$ be so small that $1+HK(x^2H)>0$ for 
\begin{equation*}
(x,H) \in \,]-\eta, \eta[ \times ]a-\eta, a+\eta[ \;:=U(a, \eta).
\end{equation*}

\noindent For such $(x,H)$ consider the function of two variables

\begin{equation*}
\Phi(x,H)=-\dfrac{2H^2K(x^2H)}{1+xHK(x^2H)}.
\end{equation*}

\noindent As $K$ is of class $C^1$, $\Phi \in C^1(U(a,\eta))$.

Now let us consider the Cauchy problem
\begin{equation}\label{(3.5)}
H'(x)=\Phi(x,H(x)),\;  H(0)=a.
\end{equation}
This problem has the unique solution  $H\in C^2(]-\mu, \mu[)$ for some $\mu>0$ (see \cite{Arn1992}, Sec. 32). 
The condition $g'(0)={\lambda}^2$ now reads  
\begin{equation}\label{(3.6)}
H(0)=\dfrac{{\lambda}^2}{2}.
\end{equation}
Now let us take $a=\dfrac{{\lambda}^2}{2}$. Let us consider the solution $H$ of Cauchy problem \eqref{(3.5)} for such $a$. 
Then $G(x)=x^2H(x), \vert x \vert<\mu,$ is the unique $C^2$ solution of equation \eqref{(3.4)} satisfying \eqref{(3.6)} and consequently
$g(x)=G'(x)=\sep{x^2H(x)}'$ is a unique $C^1$ solution of Chouikha equation satisfying $g'(0)={\lambda}^2$.
\end{proof}

Let us stress that if $f_1, f_2 \in C^1([0, \epsilon]), \epsilon>0$, and $f_1 \neq f_2$ on every interval $[0, \eta],\; 0<\eta\leq\epsilon$, then in any neighborhood of \,$0 \in \R, \, g_1 \neq g_2$, where $g_1$ and $g_2$ are the solutions of Chouikha equation that correspond to $f_1$ and to $f_2$ respectively.

Let us  also note that if one supposes that $f\in C^k[0, \epsilon]), 1 \leq k \leq \infty$, or\, f\, is real-analytic, then the unique solution $g$ of Chouikha equation is also of the same class (see \cite{Arn1992}, Sec. 32 for smooth case, and \cite{Hoch1975}, Sec. 6.2 for real-analytic case).

As a consequence of Theorem \ref{(thm 3.1)} and of \eqref{(1.8)} we obtain a fact that seems to have been completely overlooked until now.

\begin{rem} \label{remark} To every odd function $h$ of class $C^k, 1 \leq k \leq \infty$ (resp. real-analytic) defined in some neighborhood of \;  $0 \in \R$ and to every real number $\lambda>0$ there corresponds a unique function g of class $C^{k-1}$ (resp. real-analytic) defined in some neighborhood of \; $0 \in \R,\; g(0)=0,\; g'(0)=\lambda^2$ such that $0\in \R^2$ is an isochronous center for the system \eqref{(1.2)}  and that $h$ is its Urabe function.
\end{rem}

\section{Parametrization of isochronous centers}

From now on, up to the end of this Section we shall only consider the case of real-analytic or $C^\infty$ functions $g$. Let us suppose that for  function $g$, $0\in \R^2$ is an isochronous center for the system \eqref{(1.2)}. 

In the real-analytic case the corresponding Urabe function $h$, which is uniquely determined by $g$, is also real-analytic. Indeed, in this case the formula \eqref{(2.5)} proves that $X$ is a real-analytic function of $x$ such that $X'(0)=\lambda>0$. Thus the inverse function $x=x(X)$ is also real-analytic. Now, the real analyticity of $h$ follows from \eqref{(1.5)}. $h$ being odd, all powers from its Taylor expansion are odd;
\begin{equation}\label{(3.7)}
h(s)=\sum_{k\geq 0} h_{2k+1}s^{2k+1}.
\end{equation}
Deriving the identity \eqref{(1.8)} one obtains
\begin{equation}\label{(3.8)}
h'(s)=\sqrt{g'(0)}f(\frac{s^2}{2}).
\end{equation}
Now, from \eqref{(3.7)} and \eqref{(3.8)} one easily deduces the Taylor development of function $f$
\begin{equation*}
f(s)=\dfrac{1}{\sqrt{g'(0)}}\sum_{k\geq 0}
(2k+1)2^k{h_{2k+1}}s^k.
\end{equation*}
Thus the function $f$, the existence of which is given by Chouikha theorem, is real-analytic.

In the $C^\infty$ case, the same arguments as above proves that the function $h$ is also of class $C^\infty$. The function $h'$ being even, from formula 
\eqref{(2.7a)} together with Taylor formula with remainder term of arbitrary high order applied to $h'$, one concludes that the function $f$ is also of class $C^\infty$.

Together with the real-analytic counterpart of Theorem \ref{(thm 3.1)} this proves that in the real-analytic case
there exists a natural bijective correspondence between the set of the couples of real-analytic functions $f$ defined in some neighborhood of $0 \in \R$ and of real numbers $\lambda>0$ and the set of  the real-analytic functions $g$ such that $0\in \R^2$ is an isochronous center for the system \eqref{(1.2)}. Indeed, to real-analytic function $f$ defined in some neighborhood of $0 \in \R$ and to real number $\lambda>0$ we associate the unique real-analytic function $g$ such that $g(0)=0,\; g'(0)={\lambda}^2$ which is a solution of Chouikha equation, the existence of which  is given by Theorem \ref{(thm 3.1)}.

Knowing already that the function $h$ is of class $C^\infty$, applying the $C^\infty$ counterpart of Theorem \ref{(thm 3.1)} one proves the same statement for $C^\infty$ functions $f$ defined on some interval $[0, \epsilon],\; \epsilon>0$.

Let us denote by $Isochr(0,\omega)$ the germs of isochronous centers of the equation $x''+g(x)=0$ where g is a
real-analytic function defined in some neighborhood of $0\in \R, \, g(0)=0, \, g'(0)>0$. Let us denote by $C_{0}^{\omega}$ the germs 
of real-analytic functions defined in some neighborhood of $0 \in \R$. We can then state:

\begin{thm}\label{(thm 3.2)}
The Cartesian product $C_{0}^{\omega} \times  \{x \in \R; x>0\}$ and the set $Isochr(0,\omega)$ are in natural bijective correspondence. In other words the germs of real-analytic functions defined in some neighborhood of $0 \in \R$ and the strictly positive real numbers parametrize the germs of isochronous centers at $0$ of equation $x''+g(x)=0$,  with $g$ real-analytic function defined in some neighborhood of \; $0 \in \R, \; g(0)=0, \; g'(0)>0$. 
\end{thm}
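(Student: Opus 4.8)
The plan is to exhibit two mutually inverse maps between the two germ sets and to observe that all the substantive work has already been done: the existence/uniqueness supplied by Theorem \ref{(thm 3.1)} (in its real-analytic form), the equivalence ``isochronous $\Leftrightarrow$ Chouikha equation'' supplied by Theorem \ref{thm:Chouikha}, and the real-analyticity of $f$ established in the discussion preceding this statement. Accordingly, I would organize the argument around a forward map $\Psi:(f,\lambda)\mapsto g$ and a backward map $\Phi:g\mapsto(f,\lambda)$, and verify $\Phi\circ\Psi=\mathrm{id}$ and $\Psi\circ\Phi=\mathrm{id}$.

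First I would define $\Psi$. Given a germ represented by a real-analytic $f$ near $0$ together with $\lambda>0$, the real-analytic counterpart of Theorem \ref{(thm 3.1)} produces a unique real-analytic $g$ on some $]-\delta,\delta[$ with $g(0)=0$, $g'(0)=\lambda^2$ satisfying the Chouikha equation \eqref{(1.7)}; by Theorem \ref{thm:Chouikha} this $g$ defines an isochronous center, so its germ lies in $Isochr(0,\omega)$. Well-definedness on germs is then immediate: if two real-analytic representatives of the germ of $f$ agree near $0$, the uniqueness in Theorem \ref{(thm 3.1)} forces the corresponding germs of $g$ to coincide.

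Next I would define $\Phi$. Given $g\in Isochr(0,\omega)$, set $\lambda=\sqrt{g'(0)}>0$ (recall $g'(0)>0$ is part of the definition of $Isochr(0,\omega)$); by Theorem \ref{thm:Chouikha} the Chouikha equation holds for some continuous $f$ on $[0,\eta]$, and the real-analyticity argument above, culminating in \eqref{(2.7a)} and the explicit Taylor expansion of $f$, shows that $f$ is in fact real-analytic and hence defines a germ in $C_0^\omega$. The point requiring care is that $f$ is determined only through $f(G(x))=\ent{G(x)/g^2(x)}'$, and since $G(0)=0$ while $G(x)>0$ for $x\neq0$, the values $G(x)$ fill only a one-sided neighborhood $[0,G(\delta)[$ of $0$; thus $f$ is a priori pinned down merely as a one-sided germ, and it is precisely real-analyticity that lets one pass to a well-defined two-sided germ $[f]\in C_0^\omega$. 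This one-sidedness is exactly why the real-analytic hypothesis is essential, and I expect it to be the main obstacle, since a naive argument would leave the germ of $f$ underdetermined on the left of $0$.

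Finally I would check that the maps are mutually inverse. For $\Phi\circ\Psi$, starting from $(f,\lambda)$ one gets $g$ with $g'(0)=\lambda^2$ satisfying \eqref{(1.7)} for this $f$; extracting back yields $\lambda'=\sqrt{g'(0)}=\lambda$ and a function $f'$ with $f'(G(x))=\ent{G(x)/g^2(x)}'=f(G(x))$, so $f'=f$ on the one-sided neighborhood filled by $G$, whence $[f']=[f]$ by analyticity. For $\Psi\circ\Phi$, starting from $g\in Isochr(0,\omega)$ one extracts $(f,\lambda)$ with $\lambda^2=g'(0)$, and $\Psi$ returns the unique real-analytic solution of \eqref{(1.7)} with that derivative at $0$; since $g$ is itself such a solution, the uniqueness in Theorem \ref{(thm 3.1)} returns the germ of $g$. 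Naturality of the correspondence is then transparent from the explicit relations \eqref{(1.8)} and \eqref{(2.7a)} linking $f$, the Urabe function $h$, and $g$.
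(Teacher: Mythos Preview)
Your proposal is correct and follows essentially the same approach as the paper: the paper's argument (given in the paragraphs preceding the theorem rather than in a formal proof environment) also builds the forward map $(f,\lambda)\mapsto g$ from the real-analytic version of Theorem~\ref{(thm 3.1)} and the backward map from Theorem~\ref{thm:Chouikha} together with the real-analyticity of $f$ deduced via the Urabe function $h$ and its Taylor expansion. Your write-up is in fact more explicit than the paper's in checking that both compositions are the identity and in flagging the one-sided versus two-sided germ issue for $f$; the paper handles the latter point only implicitly, by exhibiting the convergent power series for $f$ in $s$.
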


The above statement gives a new light on the matter of Sec.4 of \cite{cho2011}, especially about the convergence of power series 
which appear  there.

Let us stress that from the preceding considerations it follows that the completely analogous statement to Theorem \ref{(thm 3.2)} is valid also in $C^\infty$ framework.

Why did we not formulate the similar statement in classes $C^k, k \geq 1$? Actually, the formula \eqref{(1.8)} implies the loss of differentiability when passing from function $h$ to function $f$ (compare Remark \ref{remark}).

Finally let us note that for a given function $f$ and given $\lambda>0$ the well known numerical methods applied to the Cauchy problem 
\begin{equation*}
H'(x)=\Phi(x,H(x)),\;  H(0)=\dfrac{{\lambda}^2}{2}
\end{equation*}
make it possible to find the corresponding isochronous potential $G$ with arbitrary high precision.


\section*{Acknowledgment}
I thank A.Raouf Chouikha (University Paris 13) for many years of helpful discussions on isochronous centers. I thank also Andrzej Maciejewski and Maria Przybylska (both from University of Zielona Gora, Poland) and Alain Albouy (Observatoire de Paris) for interesting discussions. In particular Andrzej Maciejewski noted that the use of Hadamard Lemma can replace my more tedious arguments and Alain Albouy critically read the  previous version of this paper. Last but not least I thank Marie-Claude Werquin (University Paris 13) for linguistic corrections.
\bibliographystyle{plain}
\bibliography{BBCS}

\end{document}